\newtheorem{definition}{Definition}[section]
\newtheorem{theorem}{Theorem}[section]
\newtheorem{proposition}{Proposition}[section]
\newtheorem{lemma}{Lemma}[section]
\newtheorem{remark}{Remark}[section]
\numberwithin{equation}{section}
\newif\ifcomment \commentfalse
\def\commentON{\commenttrue}
\long\outer\def\BC#1\\EC{\ifcomment \sloppy \par \# \ldots\dotfill
{\em #1} \dotfill \# \par \fi } \commentON
\newcommand{\remove}[1]{}
\newenvironment{vardesc]}[1]{%
\settowidth{\parindent}{#1: \ }
\makebox{#1:}}{}
\newcommand{\R}{\mathbb{R}}
\begin{document} 
\title[Global existence and asymptotic stability to a model of biofilms]{Global existence and asymptotic stability of smooth solutions to a fluid dynamics model of biofilms in one space dimension}

\author{R. Bianchini$^{\diamond}$ \& R. Natalini$^{\star}$}

\thanks{$^{\diamond}$ Dipartimento di Matematica, Universit\`a degli Studi di Roma "Tor Vergata", via della Ricerca Scientifica 1, I-00133 Rome, Italy.}
\thanks{$^{\star}$ Istituto per le Applicazioni del Calcolo "M. Picone", Consiglio Nazionale delle Ricerche, via dei Taurini 19, I-00181 Rome, Italy.}


\maketitle

\begin{abstract}
In this paper, we present an analytical study,  in the one space dimensional case, of the fluid dynamics  system proposed in \cite{cdnr}  to model the formation of biofilms.  After  showing the hyperbolicity of the system, we show that, in a open neighborhood of the physical parameters, the system is totally dissipative near its unique non vanishing equilibrium point. Using this property, we are able to prove existence and uniqueness of global smooth solutions to the Cauchy problem on the whole line for small perturbations of this equilibrium point and the solutions are shown to converge exponentially in time at the equilibrium state.\\ \\
\end{abstract}

\noindent
\subparagraph{{\bf Keywords}} Fluid dynamics models, dissipative hyperbolic equations, biofilms, global existence, asymptotic stability.

\section{Introduction}
\label{intro} 
A biofilm is a complex gel-like aggregation of microorganisms like bacteria, algae, protozoa and fungi. They stick together, attach to a surface and embed themselves in a self-produced extracellular matrix of polymeric substances, called EPS. \\
\indent In this paper, we study a fluid dynamics model, introduced in \cite{cdnr}, to describe the space-time growth of  biofilms. This model was built in the framework of mixture theory, see  \cite{raj95} or \cite{astanin}, and conserves the finite speed of propagation of the fronts. For simplicity reasons, the model describes a biofilm in which there is just one species of microorganisms, or better, all species are lumped together, but it can be extended to other situations. It has been derived starting from the equations for mass and momentum conservation, and some physical constraints  and assumptions about the behavior of the biological aggregates and their interaction with the surrounding liquid. Here we assume that the complex structure of biofilms is described by four different phases: bacteria $B(x, t)$, extracellular matrix EPS $E(x, t)$, dead cells $D(x, t)$ and a liquid phase $L(x,t)$. The quantities $B, E, D, L$ are the volume fraction of each component, then $B, E, D, L \in [0,1]$. Since we are dealing with a one dimensional model, then we have $x \in \mathbb{R}$, $t>0$. For simplicity reasons, we assume that $B, E, D$ have the same transport velocity $v_{S}$. The reaction terms are indicated by $\Gamma_{\Phi}$, with $\Phi = B, E, D, L$. Imposing the total balance of mass and momentum for each phase $\Phi$, we can write the model, see \cite{cdnr} for all details. This model was originally proposed in all space dimensions, and in the present case of one space dimension, is given by a system of six partial differential equations, which read:
\begin{equation}
\left\{
\label{completo_completo}
\begin{aligned}
& \partial_{t}B+\partial_{x}(B{v}_{S}) =\Gamma_{B}, \\
& \partial_{t}E+\partial_{x}(E{v}_{S}) =\Gamma_{E},\\
& \partial_{t}D+\partial_{x}(D{v}_{S}) =\Gamma_{D}, \\
& \partial_{t}L+\partial_{x}(L{v}_{L}) = \Gamma_{L}, \\
& \partial_{t}((1-L)v_{S}) + \partial_{x}((1-L)v_{S}^{2}) = -(1-L)\partial_{x}P-\gamma \partial_{x}(1-L) \\
&~~~~~~~~~~~~~~~~~~~~~~~~~~~~~~~~~~~~~~~~~~~+(M-\Gamma_{L})v_{L}-Mv_{S};  \\
& \partial_{t}(Lv_{L}) + \partial_{x}(Lv_{L}^{2}) = -L\partial_{x}P-(M-\Gamma_{L})v_{L}-Mv_{S}. \\
\end{aligned}
\right.
\end{equation}

To reformulate our model in a more suitable form, we assume the following volume constraint:
\begin{equation}
\label{satur}
L=1-(B+E+D),
\end{equation}
that is the assumption that the mixture is saturated and no empty space is left. In addition to the balance mass of each component, we also have the total conservation of the mass of the mixture by the following assumption:
\begin{equation}
\label{conservazionemassatotale}
\Gamma_{B}+\Gamma_{E}+\Gamma_{D}+\Gamma_{L}=0.
\end{equation}
The mass constraint in (\ref{conservazionemassatotale}) states that the mixture is closed, i.e. there is no net production of mass for the mixture. According to \cite{cdnr}, the reaction terms are given by:
\begin{equation}
\Gamma_{B}=K_{B}BL-K_{D}B;
\end{equation}
\begin{equation}
\Gamma_{D}=\alpha K_{D}B-K_{N}D;
\end{equation}
\begin{equation}
\Gamma_{E}=K_{E}BL-\epsilon E.
\end{equation}

The birth of new cells at a point depends on the quantity of liquid available in the neighborhood of the point, that is why the birth term in $\Gamma_{B}$ is a
product between the volume fraction $B$ of active cells and the volume fraction $L$ of liquid. In this way, the mass production term $\Gamma_{B}$ is the difference between a birth term and a death term, where the second is proportional to the fraction $B$ of bacteria, with rate $k_{D}$. The death term in the expression of $\Gamma_{B}$ gives rise to a creation term of the mass exchange rate for dead cells $\Gamma_{D}$, with a proportional coefficient $\alpha$, since a part of the active cells goes into liquid when the cell dies. In $\Gamma_{D}$, we also find a natural decay of dead cells with a constant decay rate $k_{N}$.
The EPS is produced by active cells in presence of liquid and therefore the production term will
be proportional to $BL$, where $k_{E}$ is the growth rate of EPS. There is also a natural decay of EPS with rate $\epsilon$.
To conclude this explanation about the mass exchange terms, we choose $\Gamma_{L}$ in order to enforce condition (\ref{conservazionemassatotale}). See again \cite{cdnr} for more details. \\

Let us now further simplify the equations of system (\ref{completo_completo}). First, by the volume constraint, the equation for the fraction $L$ is no more necessary. Summing the equations for $B, E, D, L$ in (\ref{completo_completo}) and using the volume constraint (\ref{satur}), we obtain 
\begin{equation}
\label{divergence_free}
 \partial_{x}((1-L)v_{S}+Lv_{L}) = 0,
\end{equation}
which establishes that the derivative of the average of the hydrodynamic velocity vanishes. Since we are in one space dimension and since all phases $B, E, D, L$ and velocities $v_{S}, v_{L}$ vanish at infinity, by using equation (\ref{divergence_free}) we can express the unknown liquid phase velocity in function of the solid phase velocity, and then we can eliminate the equation for the liquid phase from system (\ref{completo_completo}). Precisely, we have:
\begin{equation}
\label{vel.}
v_{L} = \dfrac{L-1}{L}v_{S}.
\end{equation}
Summing up the fifth and the sixth equation of system (\ref{completo_completo}) and using equality (\ref{vel.}), we have an explicit expression of the spatial derivative of the pressure term $P$, which solves
\begin{equation}
\label{Pressure}
\partial_{x}P = -\gamma \partial_{x}(1-L) - \partial_{x}((1-L)v_{S}^{2}+Lv_{L}^{2}).
\end{equation}
Then, substituting equation (\ref{vel.}) in (\ref{Pressure}), we have
\begin{equation}
\label{Pressure_New}
\partial_{x}P = - \gamma \partial_{x}(1-L) -\partial_{x}\Bigg(\dfrac{1-L}{L}v_{S}^{2}\Bigg).
\end{equation}
After some calculations, we obtain an equation for the solid phase velocity $v_{S}$, and, finally, system (\ref{completo_completo}) becomes a system of only four equations, which are 
\begin{equation}
\left\{
\label{completo}
\begin{aligned}
& \partial_{t}B+\partial_{x}(B{v}) =\Gamma_{B}, \\
& \partial_{t}E+\partial_{x}(E{v}) =\Gamma_{E},\\
& \partial_{t}D+\partial_{x}(D{v}) =\Gamma_{D}, \\
& \partial_{t}{v} + \partial_{x} \Bigg[ \dfrac{(3L-2)v^{2}}{2L} + \gamma (L + ln(1-L)) \Bigg] = \frac{\Gamma_{L} - M}{L(1- L)} {v}=\Gamma_{v}, \\
\end{aligned}
\right.
\end{equation}
where, from now on, $v:=v_{S}$. Obviously, we still have the biological constraints (\ref{satur}) and (\ref{conservazionemassatotale}), which close system (\ref{completo}).

In this paper, we aim to give a first analytical result to the Cauchy problem for this system  on the real line. The paper is organized as follows. In Section 2, we determine the region of hyperbolic symmetrizability of system (\ref{completo}), that ensures the local existence of smooth solutions (see \cite{Majda}, \cite{LiTaTsien}). Then, we establish some conditions in order to have a totally dissipative source. Finally, in the last section, we use this dissipative property to prove that, if we have initial data in a small neighborhood - in $H^2$-norm - of the unique equilibrium point $\bar{\textbf{u}} = (\bar{B}, \bar{E}, \bar{D}, \bar{v})$, these smooth solutions are global in time. In the last part of the paper, we analyze the asymptotic behavior of our solutions by using energy estimates. More precisely, we show that these solutions decay exponentially, in the $H^2$-norm, to the equilibrium point $\bar{\textbf{u}}$. \\ 
The proof of global existence of smooth solutions for system (\ref{completo}) takes inspiration from \cite{HN}, but here we do not make use of any strictly convex entropy. As we will discuss later, system (\ref{completo}) is ''totally dissipative'', which means that the source term has a particular dissipative property. Our proof of global existence for the solutions of (\ref{completo}) is based on this totally dissipative property, which means that dissipation prevents, for small initial data, the formation of shocks and ensures global existence of the solutions. 

\section{The quasilinear hyperbolic dissipative system}
Let us rewrite now system (\ref{completo}) in a compact form, in order to study its hyperbolic symmetrizability, which provides the local existence of smooth solutions (see \cite{Majda}, \cite{LiTaTsien}). Let us set
$$\textbf{u} = \left(\begin{array}{c}B \\E \\ D \\v\end{array}\right).$$
The flux function for system (\ref{completo}) is 
\begin{equation}
\label{flusso}
F(\textbf{u}) =  \left(\begin{array}{c}Bv \\Ev \\ Dv \\ \dfrac{(3L-2)v^{2}}{2L} + \gamma (L + ln(1-L))\end{array}\right),
\end{equation}
and 
\begin{equation}
\label{sorgenteG}
{G}(\textbf{u})=\left(\begin{array}{c}\Gamma_{B} \\\Gamma_{E} \\ \Gamma_{D} \\ \Gamma_{v}\end{array}\right)
\end{equation}
is the vector of the reaction terms. With this notation, we can write system (\ref{completo}) in the compact conservative form

$$\partial_{x}\textbf{u} + \partial_{x} F(\textbf{u}) = G(\textbf{u}).$$

To study the properties of (\ref{completo}), we need to compute the Jacobian matrix
\begin{equation}
\label{matriceA}
A(\textbf{u})= \left(\begin{array}{cccc}v & 0 & 0 & B \\0 & v & 0 & E \\0 & 0 & v & D \\\eta & \eta & \eta & \frac{(3L-2)}{L}
v\end{array}\right),
\end{equation}
 of the flux function $F(\textbf{u}).$ We can now write (\ref{completo}) as a quasilinear system of partial differential equation, in the following form

\begin{equation}
\label{compatta}
\partial_{t}\textbf{u} + A(\textbf{u}) \partial_{x}\textbf{u} = {G}(\textbf{u}).
\end{equation}

\subsection{Hyperbolicity and symmetrizability}
The main hypothesis of the theorem on local existence of smooth solutions for a general hyperbolic quasilinear system in form (\ref{compatta}) is symmetrizability (see \cite{Majda}, \cite{LiTaTsien}), namely the existence of a symmetric positive definite matrix $A_{0}$, such that $A_{0}A$ is also symmetric. 

Assume $L \neq 0$ and $L \neq 1$, and set
\begin{equation}
\label{etapositiva}
\eta := \frac{L\gamma}{(1-L)} - \frac{v^{2}}{L^{2}}.
\end{equation}

It easy to see that, if $\eta>0$, then a positive symmetrizer for the model in (\ref{completo}) is given by the matrix
\begin{equation}
\label{simmetrizzatore}
A_{0}(\textbf{u}) = \left(\begin{array}{cccc}\eta & 0 & 0 & 0 \\0 & \frac{B\eta}{E} & 0 & 0 \\0 & 0 & \frac{B\eta}{D} & 0 \\0 & 0 & 0 & B\end{array}\right).
\end{equation}

\begin{remark}
By the existence of a symmetrizer, system (\ref{compatta}), (\ref{matriceA}), (\ref{sorgenteG}) is symmetrizable hyperbolic in the following domain
\begin{equation}
\label{dominio}
 W = \Bigg\{ \textbf{u} = (B, E, D, v)\in [0,1]^3\times \R: \eta > 0, \ 0<L<1	\Bigg\}.
\end{equation}
Let us find now the domain of simple hyperbolicity of our model  (\ref{compatta}), (\ref{matriceA}), (\ref{sorgenteG}). We calculate the eigenvalues of $A(\textbf{u})$ in (\ref{matriceA}), which are the following:
\begin{enumerate}
\item $\lambda_{1, 2} = v;$
\item $\lambda_{2, 3} = \dfrac{(2L-1)v}{L} \pm \sqrt{(1-L)\Delta};$
\end{enumerate}
where 
\begin{equation}
\label{delta}
\Delta = \frac{L\gamma}{(1-L)} - \frac{v^{2}}{L} > 0,
\end{equation}
in order to have real eigenvalues. Moreover, observing (\ref{etapositiva}) we note that  $$\Delta > \eta > 0.$$ This inequality is completely in accordance with the general theory about hyperbolic symmetrizable system, because (strong) hyperbolicity follows directly from hyperbolic symmetrizability. 

Therefore, always if $\eta>0$, we can write the explicit expression of domain of symmetrizability (and then hyperbolicity) for system (\ref{compatta}), (\ref{matriceA}), (\ref{sorgenteG}), which is
\begin{equation}
\label{dominio}
 W = \Bigg\{ \textbf{u} = (B, E, D, v)\in [0,1]^3\times \R: -\frac{L^{3/2}\gamma^{1/2}}{(1-L)^{1/2}} < v < \frac{L^{3/2}\gamma^{1/2}}{(1-L)^{1/2}}, \ 0<L<1	\Bigg\}.
\end{equation}

Finally, let $\Omega$ a convex open subset of $W$. This set $\Omega$ will be our domain of work for system (\ref{compatta}), (\ref{matriceA}), (\ref{sorgenteG}).
\end{remark}
Let us set now $A_{1}(\textbf{u}) = A_{0}A(\textbf{u})$. We can write system (\ref{compatta}), (\ref{matriceA}), (\ref{sorgenteG}) in the following symmetric form
\begin{equation}
\label{compattasimmetrizzata}
A_{0}(\textbf{u}) \partial_{t} \textbf{u} + A_{1}(\textbf{u}) \partial_{x} \textbf{u} = A_{0}G(\textbf{u}),
\end{equation}
with $A_{0}, A, G$ given respectively in (\ref{simmetrizzatore}),  (\ref{matriceA}),  (\ref{sorgenteG}). \\
From symmetrizability, we have local in time smooth solution in $H^{2}(W)$ for system (\ref{compatta}), (\ref{matriceA}), (\ref{sorgenteG}) with initial data in the same space, thanks to the theorem of local well-posedness of the Cauchy problem for quasilinear hyperbolic symmetrizable systems (\cite{Majda}).






\subsection{Total dissipativity of the source}
Here we are going to show that, under some assumptions, the source term is totally dissipative and, then, under some smallness hypothesis on the initial data $\textbf{u}_{0}$, these solutions are also global in time. This dissipative property, in fact, allows to estimate the $H^{2}$-norm of solution $\textbf{u}$ and to prove that this norm is bounded for all $t > 0$, without the use of strictly convex and strictly dissipative entropy, as done in \cite{HN}). 

Let us give now our definition of \emph{Hyperbolic Symmetrizable Totally Dissipative System}. In order to clearly write this definition, we consider a general one-dimensional $n \times n$ hyperbolic symmetrizable system in the compact form (\ref{compatta}), with $\textbf{u} \in \Omega \subseteq \mathbb{R}^{n}$, where $\Omega$ is a convex open set and $A(\textbf{u}), {G}(\textbf{u})$ are smooth enough in $\Omega$. 

\begin{definition} [\textbf{(D)-Condition}]
\label{totaldissipative} Assume that system (\ref{compattasimmetrizzata}) is hyperbolic symmetrizable in a set $W$. Let $\Omega$ be a convex subset of $W$, which contains a unique equilibrium point $\bar{\textbf{u}}$ for (\ref{compattasimmetrizzata}), i.e. $ G(\bar{\textbf{u}})=0$. We say that system (\ref{compattasimmetrizzata}) is totally dissipative in $\Omega$, if   there exists a matrix $D=D(\textbf{u}, \bar{\textbf{u}}) \in {M}^{n \times n}$ such that, for every $\textbf{u}\in \Omega$, we have 

\begin{itemize}
\item ${G}(\textbf{u})=D(\textbf{u}, \bar{\textbf{u}})(\textbf{u}-\bar{\textbf{u}})$;

\item $A_{0}D(\textbf{u}, \bar{\textbf{u}})$ is negative definite.
\end{itemize}
\end{definition}
Let us check the totally dissipative property of system (\ref{completo}).
First of all, we determine the coordinates of the point in which the reaction terms vanish. Actually, we have only one isolated equilibrium point $\bar{\textbf{u}}$, which is inside of the domain of hyperbolicity. \\ 
To see that, set 
$$\bar{B} = \Bigg(1 - \dfrac{k_{D}}{k_{B}}\Bigg) \Bigg/ \Bigg(1+\dfrac{\alpha k_{D}}{k_{N}} + \dfrac{k_{D}k_{E}}{\epsilon k_{B}} \Bigg).$$ 
Therefore
\begin{equation}
\label{balancepoint}
\bar{\textbf{u}} = \bar{B} \left(\begin{array}{c}1 \\ \dfrac{k_{E} k_{D}}{\epsilon k_{B}} \\ \dfrac{\alpha k_{D}}{k_{N}} \\0\end{array}\right) =  \frac{a}{1+\frac{\alpha k_{D}}{k_{N}} + \frac{k_{D} k_{E}}{\epsilon k_{B}}} \left(\begin{array}{c} 1 \\ \dfrac{k_{E} k_{D}}{\epsilon k_{B}} \\ \dfrac{\alpha k_{D}}{k_{N}}  \\0\end{array}\right),
\end{equation}
with $a := \frac{k_{B} - k_{D}}{k_{B}}.$ It has been assumed that $\bar{B}, \bar{E}, \bar{D}, \bar{L} \in (0, 1)$. Then, from (\ref{balancepoint}), in order to have positive volume fractions $\bar{B}, \bar{E}, \bar{D}$, we have to assume
\begin{equation}
\label{puntodiequilibriominore1}
k_{B} > k_{D}.
\end{equation}
Let us observe now that inequality (\ref{dominio}), which describes the region of hyperbolic symmetrizability of (\ref{completo}), is verified near the equilibrium point $\bar{\textbf{u}}$. Then, if we take $\textbf{u}$ in a small neighborhood of this point, we are within the domain of hyperbolic symmetrizability. For this reason, to show our results of global existence of smooth solutions, the idea is to consider an initial data $\textbf{u}_{0}$ in a convex and compact neighborhood of this equilibrium point $\bar{\textbf{u}}$, then we set

\begin{equation}
\label{dominio1}
\Omega =\bar{\mathrm{B}}_{r}(\bar{\textbf{u}}), 
\end{equation}
for a fixed  $r > 0$. Let us write the matrix $D$, as in \textbf{(D)-Condition}, for the source term of system (\ref{completo}).  Then
$$G(\textbf{u}) =  D(\textbf{u},\bar{\textbf{u}})(\textbf{u} - \bar{\textbf{u}}),$$
with 
\begin{equation}
\label{matriceD}
D(\textbf{u},\bar{\textbf{u}}) = \left(\begin{array}{cccc}-Bk_{B} & -Bk_{B} & -Bk_{B} & 0 \\\frac{\epsilon \bar{E}(L-\bar{B})}{\bar{B}\bar{L}} & \frac{-\epsilon (BL+\bar{B}E)}{\bar{B}\bar{L}} & -\frac{\epsilon E}{\bar{L}} & 0 \\\frac{k_{N}D}{\bar{B}} & 0 & -\frac{k_{N}B}{\bar{B}} & 0 \\ 0 & 0 & 0 & \frac{\Gamma_{L}-M}{L(1-L)}\end{array}\right).
\end{equation}
In order to show that system in (\ref{compattasimmetrizzata}), (\ref{simmetrizzatore}), (\ref{matriceA}), (\ref{sorgenteG}) verifies the $\textbf{(D)-Condition}$ in a small neighborhood of $\bar{\textbf{u}}$, we have only to prove that matrix $A_{0}D(\bar{\textbf{u}},\bar{\textbf{u}})$ is negative definite. Using the expression of $\bar{\textbf{u}}$, we rewrite the expressions of the matrices
$$A_{0}(\bar{\textbf{u}}) = \left(\begin{array}{cccc} \frac{k_{D} \gamma}{(k_{B} - k_{D})}  & 0 & 0 & 0 \\ 0 & \frac{\epsilon k_{B} \gamma}{k_{E}(k_{B} - k_{D})}  & 0 & 0 \\ 0 & 0 & \frac{k_{N} \gamma}{\alpha (k_{B} -k_{D})} & 0 \\ 0 & 0 & 0 & \bar{B}\end{array}\right),$$
and 
$$D(\bar{\textbf{u}},\bar{\textbf{u}}) = \left(\begin{array}{cccc}-\bar{B}k_{B} & -\bar{B}k_{B} & -\bar{B}k_{B} & 0 \\k_{E}(\bar{L} - \bar{B}) & - \epsilon - \bar{B} k_{E} & -\bar{B} k_{E} & 0 \\ \alpha k_{D} & 0 & -k_{N} & 0 \\ 0 & 0 & 0 & - \frac{k_{B}^{2}M}{k_{D}(k_{B} - k_{D})}\end{array}\right).$$

Using the \emph{Routh-Hurwitz conditions} (\cite{Murray}) on $\frac{(A_{0}D) + (A_{0}D)^{T}}{2}$ , we aim to establish the sign of  $A_{0}D(\bar{\textbf{u}}, \bar{\textbf{u}})$. We have
\begin{equation}\begin{array}{l}
\Bigg(\frac{(A_{0}D) + (A_{0}D)^{T}}{2}\Bigg)(\bar{\textbf{u}}, \bar{\textbf{u}})\\ \\

\label{A0Dsimmetrizzata} = \left(\begin{array}{cccc}-\bar{B}k_{B}k_{D} & \frac{k_{B}}{2}[\epsilon(\bar{L} - \bar{B}) - k_{D}\bar{B}]  & \frac{k_{D}}{2}[k_{N} - \bar{B}k_{B}] & 0 \\\frac{k_{B}}{2}[\epsilon(\bar{L} - \bar{B}) - k_{D}\bar{B}] & -\frac{\epsilon k_{B}(\epsilon + k_{E} \bar{B})}{k_{E}} & -\frac{\epsilon k_{B} \bar{B}}{2}  & 0 \\\frac{k_{D}}{2}[k_{N} - \bar{B}k_{B}] & -\frac{\epsilon k_{B} \bar{B}}{2}  & -\frac{k_{N}^{2}}{\alpha} & 0 \\ 0 & 0 & 0 & - \frac{k_{B}^{2}M\bar{B}}{k_{D}(k_{B} - k_{D})}\end{array}\right).\end{array}
\end{equation}
Obviously, we are interested to the internal $3\times3$ matrix which contains the main diagonal of the originale $4 \times 4$ matrix. Its characteristic polynomial is $P(\lambda)$, and its general form is
\begin{equation}
\label{polinomio}
P(\lambda) = \lambda^{3} + a_{1} \lambda^{2} + a_{2} \lambda + a_{3}.
\end{equation}
Here, the coefficients $a_{i}$ ($ i = 1, 2, 3 $) are all real. We require the conditions on the $a_{i}$, such that the zeros of $P(\lambda)$ have $Re\lambda < 0.$ The necessary and sufficient conditions for this to hold are the \emph{Routh-Hurwitz conditions} \cite{Murray}. For the cubic equation, $\lambda^{3} + a_{1} \lambda^{2} + a_{2} \lambda + a_{3} = 0,$ the \emph{Routh-Hurwitz conditions} for $Re\lambda < 0$ are given by the following $[RH]$ conditions:
\begin{equation}[RH]
\left\{
\label{RouthHurwitz}
\begin{aligned}
&a_{1} > 0;\\
&a_{3} > 0;\\
&a_{1} a_{2} - a_{3} > 0.
\end{aligned}
\right.
\end{equation}

In our case, the coefficients of (\ref{polinomio}) are the following:
\begin{equation}
\left\{
\label{coefficients}
\begin{aligned}
&a_{1} = \bar{B} k_{B} k_{D} + \frac{\epsilon^{2} k_{B}}{k_{E}} + \epsilon \bar{B} + \frac{k_{N}^{2}}{\alpha};\\ \\
&a_{2} =  \frac{\epsilon^{2} \bar{B} k_{B}^{2} k_{D}}{k_{E}} + \frac{\epsilon \bar{B} k_{B} k_{N}^{2}}{\alpha} + \frac{\epsilon k_{B} k_{D} k_{N}^{2}}{\alpha} + \frac{\bar{B} k_{B} k_{D}(\epsilon \bar{B} k_{B} + \epsilon k_{D} + k_{D} k_{N} + \epsilon^{2})}{2}\\
&- \frac{\bar{B}^{2} k_{B}^{2}(k_{D}^{2} + \epsilon^{2})}{2} - \frac{k_{D}^{2}(k_{N}^{2} + \epsilon^{2})}{4}; \\ \\
&a_{3} = \frac{\epsilon^{2} \bar{B} k_{B}^{2} k_{D}^{2} k_{N}}{2 k_{E}} + \frac{\epsilon^{2} \bar{B} k_{B}^{2} k_{D} k_{N}^{2}}{\alpha k_{E}} + \frac{\epsilon \bar{B} k_{B} k_{D} k_{N}^{2} (\bar{B}k_{B} + \epsilon + k_{D})}{2 \alpha} \\
&+ \frac{\epsilon \bar{B}k_{B} k_{D}^{2} k_{N} (\epsilon + \bar{B} k_{B})}{4} - \frac{\epsilon \bar{B}k_{B}k_{D}(\epsilon \bar{B}k_{B} k_{N} + \epsilon \bar{B} k_{B} k_{D} + k_{D} k_{N}^{2})}{4} \\
&- \frac{\epsilon^{2} k_{B} k_{D}^{2} (k_{N}^{2} + \bar{B}^{2}k_{B}^{2})}{4 k_{E}} 
- \frac{k_{N}^{2}(\epsilon^{2} k_{D}^{2} + \epsilon^{2} \bar{B}^{2} k_{B}^{2} + \bar{B}^{2} k_{B}^{2} k_{D}^{2})}{4 \alpha}.
\end{aligned}
\right.
\end{equation}

Therefore, we can state our total dissipation result.
\begin{proposition}
Assuming that (\ref{puntodiequilibriominore1}) holds. Then, if $[RH]$ condition is valid for $a_1, a_{2}, a_{3}$ in (\ref{coefficients}), system (\ref{compattasimmetrizzata}), (\ref{simmetrizzatore}), (\ref{matriceA}), (\ref{sorgenteG}) satisfies \textbf{(D)-Condition}, and so it is totally dissipative in a neighborhood of its equilibrium point.
\end{proposition}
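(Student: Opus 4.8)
The plan is to verify the two bullets of the \textbf{(D)-Condition} in turn, with essentially all the work concentrated in the second. The first bullet, $G(\textbf{u}) = D(\textbf{u},\bar{\textbf{u}})(\textbf{u}-\bar{\textbf{u}})$, is a factorization identity that I would check component by component, using the explicit reaction terms $\Gamma_B,\Gamma_E,\Gamma_D,\Gamma_v$ and the coordinates of $\bar{\textbf{u}}$ in (\ref{balancepoint}) to match each entry against the matrix $D$ in (\ref{matriceD}). This is routine and uses nothing beyond the algebra already set up in the excerpt.

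For the second bullet, I would first reduce the requirement ``for every $\textbf{u}\in\Omega$'' to a single point. Negative definiteness is an open condition, and the entries of $A_0 D(\textbf{u},\bar{\textbf{u}})$ depend continuously on $\textbf{u}$ throughout $W$; hence if $A_0 D(\bar{\textbf{u}},\bar{\textbf{u}})$ is negative definite, the same holds on a sufficiently small ball about $\bar{\textbf{u}}$. Since we are free to fix $r>0$ small in the choice $\Omega=\bar{\mathrm{B}}_r(\bar{\textbf{u}})$ of (\ref{dominio1}), it suffices to analyze the value at $\bar{\textbf{u}}$. Because a square matrix is negative definite if and only if its symmetric part is, the object to study is precisely the symmetrized matrix displayed in (\ref{A0Dsimmetrizzata}).

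I would then exploit its block structure. The fourth row and column decouple, leaving the scalar diagonal entry $-\tfrac{k_B^2 M\bar{B}}{k_D(k_B-k_D)}$, which is strictly negative once $k_B>k_D$ (by (\ref{puntodiequilibriominore1}), giving $\bar B>0$) and $M>0$. It therefore suffices to show the leading $3\times 3$ block is negative definite. That block is symmetric, so its eigenvalues are real, and its characteristic polynomial is the cubic $P(\lambda)$ of (\ref{polinomio}) with the coefficients listed in (\ref{coefficients}). By hypothesis the $[RH]$ conditions (\ref{RouthHurwitz}) hold, so every root of $P$ has negative real part; but a \emph{real} root with negative real part is simply negative, so all three eigenvalues are negative and the block is negative definite. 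Assembling the two pieces shows $A_0 D(\bar{\textbf{u}},\bar{\textbf{u}})$ is negative definite, and the continuity argument above propagates this to all of $\Omega$, completing the verification of the \textbf{(D)-Condition}.

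The obstacle here is bookkeeping rather than conceptual: the coefficients in (\ref{coefficients}) are bulky, and the step most prone to error is the identification of $a_2$ and $a_3$ as the sum of the $2\times 2$ principal minors and the determinant of the $3\times 3$ block. Since these are \emph{assumed} to satisfy $[RH]$ rather than established here, the argument is short; the one point I would state explicitly is the equivalence, valid for a symmetric matrix, between the Routh--Hurwitz (negative-real-part) conclusion and genuine negative definiteness, which is what lets us invoke the cubic criterion in place of a direct Sylvester computation.
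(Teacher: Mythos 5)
Your proposal is correct and follows essentially the same route as the paper: reduce to negative definiteness of $A_0 D$ at the equilibrium point (propagated to a small ball by continuity), pass to the symmetrized matrix (\ref{A0Dsimmetrizzata}), note that the fourth row and column decouple with a negative diagonal entry under (\ref{puntodiequilibriominore1}), and conclude negative definiteness of the $3\times 3$ block from the $[RH]$ conditions on $P(\lambda)$ together with the fact that a symmetric matrix has real eigenvalues. The only difference is expository: you state explicitly the continuity reduction and the symmetric-matrix equivalence between Routh--Hurwitz and negative definiteness, both of which the paper uses implicitly.
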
 
Now, the first condition of (\ref{RouthHurwitz}) for $a_{1}$ in (\ref{coefficients}) is always verified. Regarding the second two conditions, it can be verified that they hold for some $\epsilon, \alpha, k_{B}, k_{D}, $  $k_{E}, k_{N}$, in particular for the values of the parameters in the following table, which was used in \cite{cdnr}:
\begin{center}
\begin{table}[ht]\centering\caption{A list of (dimensional) parameters \cite{cdnr}.}
\begin{tabular}{|c|c|c|c|}
\hline 
Param. & Value & Unit of meas. & Indications \\ 
\hline 
$k_{B}$ & $8 \cdot 10^{-6}$ & 1/sec & Bact. growth rate \\ 
\hline 
$k_{E}$ & $12 \cdot 10^{-6}$ & 1/sec & EPS growth rate \\ 
\hline 
$k_{D}$ & $2 \cdot 10^{-7}$ & 1/sec & Bact. death rate \\ 
\hline 
$k_{N}$ & $1 \cdot 10^{-6}$ & 1/sec & Dead cells consumption \\ 
\hline 
$\epsilon$ & $1.25 \cdot 10^{-7}$ & 1/sec & EPS death rate \\ 
\hline 
$\alpha$ & 0.25 & dimensionless & coeff. liquid dead-cells \\ 
\hline 
\end{tabular} 
\end{table}
\end{center}

More generally, if we consider a costant $a \in [0.5, 1.5]$, we can restrict our attention at the class of coefficients such that 
\begin{equation}\label{param} \epsilon = 1.25 \cdot 10^{-7} \simeq 10^{-7}, ~~ k_{N} = 10 a \epsilon, ~~, k_{E} = 100 a \epsilon, ~~ k_{D} = 2 a \epsilon, ~~ k_{B} = 70 a \epsilon.
\end{equation}
By this reduction, condition for $a_{3}$ (the third condition of (\ref{RouthHurwitz})) becomes a second degree polynomial inequality
$$ - 3.366175 \cdot 10^{10} a^{2} + 4.1829869 \cdot 10^{10} a - 8.303370950 \cdot 10^{9} > 0,$$
that can be easily solved, and the inequality holds for $a \in [0.5, 1] $. Moreover, by a simple numerical verification, it can be seen that  the second condition in  (\ref{RouthHurwitz}) is also true in this interval. Then, if we take the coefficients as in \eqref{param}, then $\textbf{(D)-Condition}$ holds ture for $a \in [0.5, 1]$.

\section{Global existence of smooth solutions for small initial data and  asymptotic behavior}

Let us present now  the proof of the global existence of smooth solutions to system (\ref{completo}) with initial data in a small neighborhood, in $H^{2}$-norm, of the equilibrium point. We want to follow the proof of global existence of smooth solutions for weakly dissipative hyperbolic systems with a convex entropy in  \cite{HN} and \cite{BHN}. Using \textbf{(D)-Condition}, we can write the $L^2$-norm estimates of the local solution $\textbf{u}$ and of its first and second derivatives. Then, global existence of smooth solutions follows from these estimates. We underline that, in order to get these estimates, we use only the dissipative property of the source term - \textbf{(D)-Condition} - and we do not use any strictly convex and strictly dissipative entropy for system (\ref{completo}), which is an assumption of the theorem of global existence in \cite{HN}.  \\
Now, if we take the initial data in a small neighborhood of the equilibrium point $\bar{\textbf{u}}$, and we assume that our parameters are such that the  $\textbf{(D)-Condition}$ is verified,  we are able to prove global existence of smooth solutions to model (\ref{completo}). Moreover,  using the totally dissipative property of \textbf{(D)-Condition}, we can prove that this global solution will decay to the unique equilibrium point.
In the following, let us state our global existence and asymptotic behavior result. 
\begin{theorem}
\label{Theorem}
Consider system (\ref{completo}) and its unique equilibrium point $\bar{\textbf{u}}$ in (\ref{balancepoint}) and assume that (\ref{puntodiequilibriominore1}) holds. If this system satisfies \textbf{(D)-Condition}, then there exists a $0 < \delta < r$ such that, if $||\textbf{u}_{0}-\bar{\textbf{u}}||_{2} \le \delta $, then there is a unique global solution $\textbf{u}$ with initial data $\textbf{u}_{0}$, which verifies
$$\textbf{u}-\bar{\textbf{u}} \in C([0, +\infty), H^{2}(\mathbb{R})) \cap C^{1}([0, +\infty), H^{1}(\mathbb{R}))$$
and
\begin{equation}
\label{funzio}
sup_{~ 0 \le T < +\infty~}||(\textbf{u}-\bar{\textbf{u}})(t)||_{2}^{2} + \int_{0}^{T} ||(\textbf{u}-\bar{\textbf{u}})(t)||_{2}^{2} ~ d\tau \le C(\delta)||\textbf{u}_{0}-\bar{\textbf{u}}||_{2},
\end{equation}
where  $C(\delta)$ is a positive constant.
Moreover, the global solution $\textbf{u}$ decays exponentially in $H^{2}$-norm to the equilibrium point $\bar{\textbf{u}}$, i.e.
\begin{equation}
\label{asymptotic}
||\textbf{u} - \bar{\textbf{u}}||_{H^{2}(\mathbb{R})} \le C_{1} e^{~ \beta t} ||\textbf{u}_{0} - \bar{\textbf{u}}||_{H^{2}(\mathbb{R})},
\end{equation}
where $C_{1}, \beta$ are positive constants.
\end{theorem}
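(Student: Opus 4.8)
The plan is to follow the classical scheme for dissipative symmetrizable hyperbolic systems (as in \cite{HN}, \cite{BHN}): combine the local existence and uniqueness already granted by symmetrizability (\cite{Majda}) with a uniform a priori energy estimate, and then invoke the standard continuation principle to promote the local solution to a global one. Throughout I set $\mathbf{w} = \mathbf{u} - \bar{\mathbf{u}}$, which, since $\bar{\mathbf{u}}$ is constant, solves the symmetrized system $A_0(\mathbf{u})\partial_t \mathbf{w} + A_1(\mathbf{u})\partial_x \mathbf{w} = A_0(\mathbf{u})G(\mathbf{u})$; by the \textbf{(D)-Condition} the right-hand side equals $A_0 D(\mathbf{u},\bar{\mathbf{u}})\mathbf{w}$. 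The argument is carried out on the fixed convex compact ball $\Omega = \bar{\mathrm{B}}_r(\bar{\mathbf{u}})$, with $r$ taken small enough that $A_0(\mathbf{u})$ is uniformly positive definite and $A_0 D(\mathbf{u},\bar{\mathbf{u}})$ is uniformly negative definite on $\Omega$; this is legitimate since the Routh--Hurwitz computation gives negative definiteness at $\bar{\mathbf{u}}$, and both properties persist under small perturbations by continuity.

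The heart of the matter is a triple of energy estimates, for $\mathbf{w}$, $\partial_x\mathbf{w}$ and $\partial_x^2\mathbf{w}$. For the zeroth-order one I take the $L^2$ inner product of the symmetrized equation with $\mathbf{w}$: exploiting the symmetry of $A_1$ and integrating by parts, the time term yields $\tfrac{1}{2}\tfrac{d}{dt}\int \mathbf{w}^T A_0\mathbf{w}\,dx$ up to the remainder $\tfrac{1}{2}\int \mathbf{w}^T(\partial_t A_0)\mathbf{w}\,dx$, the transport term leaves only $-\tfrac{1}{2}\int \mathbf{w}^T(\partial_x A_1)\mathbf{w}\,dx$, and the source term gives exactly $\int \mathbf{w}^T A_0 D\,\mathbf{w}\,dx \le -c\|\mathbf{w}\|_{L^2}^2$ by the \textbf{(D)-Condition}. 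The coefficient-derivative remainders are cubic: $\partial_t A_0$ and $\partial_x A_1$ are smooth functions of $\mathbf{u}$ multiplied by $\partial_t\mathbf{u}$ or $\partial_x\mathbf{u}$, and time derivatives are turned into spatial ones through the equation itself, so using $H^2\hookrightarrow W^{1,\infty}$ in one dimension each remainder is bounded by $C\|\mathbf{w}\|_{H^2}\|\mathbf{w}\|_{L^2}^2$. For the first- and second-order estimates I differentiate the symmetrized system once and twice in $x$ and pair with $\partial_x\mathbf{w}$, $\partial_x^2\mathbf{w}$. The decisive point is that differentiating the source $A_0 D\mathbf{w}$ reproduces the leading dissipative term $A_0 D\,\partial_x^k\mathbf{w}$ at every order, so the \textbf{(D)-Condition} again furnishes $-c\|\partial_x^k\mathbf{w}\|_{L^2}^2$; the commutators $[\partial_x^k,A_0]\partial_t\mathbf{u}$, $[\partial_x^k,A_1]\partial_x\mathbf{u}$ and the term $\partial_x(A_0 D)\mathbf{w}$ are again controlled by $C\|\mathbf{w}\|_{H^2}$ times the square of the relevant norm.

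Summing the three estimates produces the energy $E(t) = \tfrac{1}{2}\sum_{k=0}^{2}\int (\partial_x^k\mathbf{w})^T A_0\,(\partial_x^k\mathbf{w})\,dx$, which is equivalent to $\|\mathbf{w}(t)\|_{H^2}^2$ by the uniform positivity of $A_0$ on $\Omega$, and the combined inequality $\tfrac{d}{dt}E(t) \le \left(-c + C\|\mathbf{w}\|_{H^2}\right)\|\mathbf{w}\|_{H^2}^2$. I close this by a continuity (bootstrap) argument: choosing $\delta$ so small that $C\|\mathbf{w}_0\|_{H^2}\le c/2$ and assuming on a maximal interval that $\|\mathbf{w}(t)\|_{H^2}$ stays below the threshold keeping $\mathbf{u}\in\Omega$, the inequality improves to $\tfrac{d}{dt}E \le -\tfrac{c}{2}\|\mathbf{w}\|_{H^2}^2 \le -\kappa E(t)$ for some $\kappa>0$. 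Integrating in time yields at once the uniform-in-time bound together with the space-time $L^2$ control of $\mathbf{w}$, i.e. estimate \eqref{funzio}, and in particular shows $\mathbf{u}(t)$ never leaves $\Omega$, which closes the bootstrap. The continuation criterion for quasilinear symmetrizable systems then forbids finite-time blow-up of the $H^2$-norm, giving the global solution in $C([0,+\infty);H^2(\mathbb{R}))\cap C^1([0,+\infty);H^1(\mathbb{R}))$, while $\tfrac{d}{dt}E\le -\kappa E$ and the norm equivalence give $\|\mathbf{w}(t)\|_{H^2}\le C_1 e^{-\beta t}\|\mathbf{w}_0\|_{H^2}$ with $\beta>0$, that is \eqref{asymptotic}.

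The main obstacle is the bookkeeping in the first- and second-order estimates: one must check that every term arising from differentiating the $\mathbf{u}$-dependent matrices $A_0(\mathbf{u})$, $A_1(\mathbf{u})$, $D(\mathbf{u},\bar{\mathbf{u}})$, together with the above commutators, is genuinely a higher-order (at least cubic) perturbation, hence absorbable by the dissipation once the data are small. Compared with \cite{HN} this step is in fact easier here: because the \textbf{(D)-Condition} makes $A_0 D$ negative definite on the whole state space rather than only on a subspace transversal to the kernel of $G$, dissipation is directly available for all components of $\partial_x^k\mathbf{w}$, so no Kawashima--Shizuta-type coupling through the hyperbolic part and no strictly convex, strictly dissipative entropy are needed to recover the otherwise missing derivatives.
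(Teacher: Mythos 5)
Your proposal is correct, and its computational core coincides with the paper's: symmetrized energy estimates at orders $0,1,2$ in which the \textbf{(D)-Condition} supplies the full dissipative term $-c\|\partial_x^k\textbf{w}\|_{L^2}^2$, while every coefficient-derivative and commutator term is cubic (after converting time derivatives of coefficients into space derivatives through the equation) and hence absorbable for small data --- exactly the content of Lemma \ref{lemma1}, Lemma \ref{lemma2} and the differential inequalities \eqref{zerorder}--\eqref{secondorder}. Where you differ is in how the argument is closed, and your closure is the more unified one. The paper runs the estimates twice: first in time-integrated form, packaged through the Nishida-type functional \eqref{funzionale}, to obtain Proposition \ref{proposizione} (the inequality \eqref{disuguaglianzafondamentale}), from which global existence is deduced by quoting the classical quadratic-inequality continuation argument of \cite{HN}, \cite{Nishida}; then a second time in differential form, summed into the Gronwall inequality \eqref{Gronwall}, to get the decay \eqref{asymptotic}. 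You keep only the differential form, sum it into the single inequality $\frac{d}{dt}\mathrm{E}\le(-c+C\|\textbf{w}\|_{H^2})\|\textbf{w}\|_{H^2}^2$, and close it with an explicit bootstrap; integrating then yields the uniform bound and space-time control \eqref{funzio}, the persistence of $\textbf{u}(t)$ in $\Omega$, and the exponential decay \eqref{asymptotic} simultaneously, with globality following from the standard continuation criterion. What your route buys is economy and rigor at one delicate point: the paper's decay argument asserts $\nu\mathrm{E}^{3/2}<\gamma\mathrm{E}$ ``at least for small times'' and leaves the propagation of this smallness implicit, whereas your bootstrap makes precisely that propagation explicit (and your exponent has the correct negative sign, $e^{-\beta t}$, which the paper's statement \eqref{asymptotic} misprints as $e^{\beta t}$). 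What the paper's route buys is that Proposition \ref{proposizione} is stated in exactly the form required by the global-existence machinery of \cite{HN} and \cite{Nishida}, so that step can be cited verbatim. Both closures rest on the structural observation you emphasize: total dissipativity makes the whole $H^2$ energy decay, so no Shizuta--Kawashima coupling and no strictly convex dissipative entropy are needed.
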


In accordance with the hypothesis, we work in a neighborhood $\Omega$ of the equilibrium point $\bar{\textbf{u}}$. Let us introduce the new unknown
\begin{equation}
\textbf{w}:=\textbf{u}-\bar{\textbf{u}}. 
\end{equation}
Then, the equation in (\ref{compattasimmetrizzata}) becomes
\begin{equation}
\label{compatta1}
A_{0}(\textbf{w}+\bar{\textbf{u}})\partial_{t}\textbf{w} + A_{1}(\textbf{w}+\bar{\textbf{u}})\partial_{x}\textbf{w}=A_{0}{G}(\textbf{w}+\bar{\textbf{u}}).
\end{equation}

To prove global existence for (\ref{completo}), we follow the  approach proposed in   \cite{Nishida}, see also \cite{Kawashima} and \cite{HN}, and we introduce the functional
\begin{equation}
\label{funzionale}
N_{l}^{2}(t) := sup_{~ 0 \le \tau \le t ~}||\textbf{w}(\tau)||_{l}^{2} + \int_{0}^{t} ||\textbf{w}(\tau)||_{l}^{2} ~ d\tau,
\end{equation}
for $l = 0,1,2.$

\begin{proposition}
\label{proposizione}
Let $T > 0$, and assume that there exists a smooth solution $\textbf{w}$ of (\ref{compattasimmetrizzata}), (\ref{simmetrizzatore}), (\ref{matriceA}), (\ref{sorgenteG}) in $[0, T]$. Then, there exists $\epsilon > 0$ and $C > 0$ such that, if $N_{2}(T) \le \epsilon$, 

\begin{equation}
\label{disuguaglianzafondamentale}
N_{2}^{2}(T) \le C(N_{2}^{2}(0) + N_{2}^{3}(T)).
\end{equation}
\end{proposition}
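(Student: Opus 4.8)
The plan is to run the classical energy method for the symmetric hyperbolic system (\ref{compatta1}), differentiating in $x$ up to second order and exploiting the full-rank dissipation supplied by \textbf{(D)-Condition} at every derivative level. Since $N_2(T)\le\ep$ is small, the one-dimensional Sobolev embedding $H^1(\R)\hookrightarrow L^\infty(\R)$ keeps $\textbf{u}=\textbf{w}+\bar{\textbf{u}}$ inside a compact neighborhood of $\bar{\textbf{u}}$ on which $A_0$ is uniformly symmetric positive definite, $A_1=A_0A$ is symmetric, and — by continuity together with the negative definiteness of $A_0D(\bar{\textbf{u}},\bar{\textbf{u}})$ proved via the $[RH]$ conditions — the quadratic form $x^T\!A_0D(\textbf{u},\bar{\textbf{u}})x$ stays strictly negative with a uniform constant $c>0$. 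Throughout I would eliminate time derivatives in favour of spatial ones through the equation itself, $\partial_t\textbf{w}=G-A_0^{-1}A_1\,\partial_x\textbf{w}$, so that every $\partial_t\textbf{w}$ is controlled by $\|\textbf{w}\|_2$.

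First, at the base level I would multiply (\ref{compatta1}) by $\textbf{w}$ and integrate in $x$. Using $A_0=A_0^T$ the time term gives $\tfrac12\frac{d}{dt}\int \textbf{w}^TA_0\textbf{w}\,dx$ up to a remainder $\tfrac12\int\textbf{w}^T(\partial_tA_0)\textbf{w}\,dx$; using $A_1=A_1^T$ and integrating by parts, the convective term leaves only $-\tfrac12\int\textbf{w}^T(\partial_xA_1)\textbf{w}\,dx$; and the source term is $\int\textbf{w}^TA_0D\,\textbf{w}\,dx\le -c\,\|\textbf{w}\|_0^2$. This yields
\[
\frac{d}{dt}\int \textbf{w}^TA_0\textbf{w}\,dx + 2c\,\|\textbf{w}\|_0^2 \le \int\textbf{w}^T(\partial_tA_0)\textbf{w}\,dx - \int\textbf{w}^T(\partial_xA_1)\textbf{w}\,dx.
\]
Since $\partial_tA_0$ and $\partial_xA_1$ are smooth functions of $\textbf{u}$ multiplied by $\partial_t\textbf{w}$, respectively $\partial_x\textbf{w}$, the right-hand side is bounded by $C(\|\partial_x\textbf{w}\|_\infty+\|\partial_t\textbf{w}\|_\infty)\|\textbf{w}\|_0^2\le C\|\textbf{w}\|_2\,\|\textbf{w}\|_0^2$, i.e. a cubic quantity.

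Next I would differentiate (\ref{compatta1}) once and twice in $x$, set $\textbf{w}_k=\partial_x^k\textbf{w}$, and repeat the pairing with $\textbf{w}_k$ for $k=1,2$. The principal source contribution $\int\textbf{w}_k^T(A_0D)\,\textbf{w}_k\,dx\le -c\,\|\textbf{w}_k\|_0^2$ again supplies dissipation at each level, so summing $k=0,1,2$ controls the full $H^2$-dissipation $c\,\|\textbf{w}\|_2^2$. All the additional terms are commutators of $\partial_x^k$ with the matrices $A_0$, $A_1$, $A_0D$, namely sums of products $(\partial_x^jA_\bullet)(\partial_x^{k-j}\textbf{w})$ with $j\ge1$; by the Moser and Gagliardo--Nirenberg inequalities in one dimension each such term carries an extra factor of $\textbf{w}$ or its derivatives measured in $L^\infty$, hence is controlled by $C\,\|\textbf{w}\|_2\cdot\|\textbf{w}\|_2^2$. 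Collecting the three levels gives
\[
\frac{d}{dt}\,\mathcal{E}(t)+2c\,\|\textbf{w}(t)\|_2^2\le C\,\|\textbf{w}(t)\|_2\,\|\textbf{w}(t)\|_2^2,
\qquad \mathcal{E}(t):=\sum_{k=0}^{2}\int \textbf{w}_k^TA_0\textbf{w}_k\,dx,
\]
where $\mathcal{E}(t)$ is equivalent to $\|\textbf{w}(t)\|_2^2$ by the uniform positivity of $A_0$.

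Finally I would integrate over $[0,T]$ and take the supremum. Dropping nonnegative terms gives $\sup_{\tau\le T}\|\textbf{w}(\tau)\|_2^2\le C\big(\mathcal{E}(0)+\int_0^T\|\textbf{w}\|_2^3\,d\tau\big)$ and $\int_0^T\|\textbf{w}\|_2^2\,d\tau\le C\big(\mathcal{E}(0)+\int_0^T\|\textbf{w}\|_2^3\,d\tau\big)$; adding them reconstructs $N_2^2(T)$ on the left. On the right, $\mathcal{E}(0)\le C\|\textbf{w}_0\|_2^2\le C N_2^2(0)$, while the cubic remainder obeys $\int_0^T\|\textbf{w}\|_2^3\,d\tau\le(\sup_\tau\|\textbf{w}\|_2)\int_0^T\|\textbf{w}\|_2^2\,d\tau\le N_2(T)\,N_2^2(T)=N_2^3(T)$, which yields exactly $N_2^2(T)\le C(N_2^2(0)+N_2^3(T))$. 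The main obstacle is the top level $k=2$: one must verify that in every commutator the highest derivative $\partial_x^2\textbf{w}$ is always paired against a factor lying in $L^\infty$, so that the products are genuinely cubic and the smallness $N_2(T)\le\ep$ absorbs any borderline cross-terms — in particular the lower-order pieces of $\partial_x^2(A_0D\,\textbf{w})$ — into the dissipation $c\,\|\textbf{w}\|_2^2$. It is precisely here that the one-dimensional Sobolev structure and the full-rank (rather than merely partial) dissipation of \textbf{(D)-Condition} are essential, since they let the source control all components of $\textbf{w}$ at each order without any strictly convex entropy.
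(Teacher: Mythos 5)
Your proposal is correct and follows essentially the same route as the paper: energy estimates for the symmetrized system at derivative orders $0,1,2$, with the \textbf{(D)-Condition} supplying full dissipation at each level, time derivatives eliminated via the equation, and the one-dimensional Sobolev embedding making every commutator/remainder term cubic so that integration in time yields $N_{2}^{2}(T) \le C(N_{2}^{2}(0)+N_{2}^{3}(T))$. The only difference is presentational: the paper splits the argument into Lemma \ref{lemma1} (the $L^{2}$ level) and Lemma \ref{lemma2} (first and second derivatives), integrating directly over $\mathbb{R}\times[0,T]$, whereas you sum the three levels into a single energy functional and write one differential inequality before integrating.
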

In a classical way, the first part of \emph{Theorem \ref{Theorem}} about global existence and uniqueness of smooth solutions follows directly from \emph{Proposition \ref{proposizione}} (see \cite{HN}, \cite{Nishida}).
The following Lemma is the first step of the proof of global existence. 

\begin{lemma}
\label{lemma1}
If $N_{2}(T) \le \epsilon$, then 
\begin{equation}
\label{lemma1eq}
N_{0}^{2}(T) \le C_{1}(N_{2}^{2}(0) + N_{2}^{3}(T)) .  
\end{equation}
\end{lemma}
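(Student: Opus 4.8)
The plan is to run the standard $L^2$ energy estimate on the symmetrized equation \eqref{compatta1}, using the dissipation supplied by the \textbf{(D)-Condition}. I would first take the scalar product of \eqref{compatta1} with $\textbf{w}$ and integrate over $\mathbb{R}$. Because $A_0$ and $A_1=A_0A$ are symmetric, the time term becomes $\frac{d}{dt}\frac12\int\langle A_0\textbf{w},\textbf{w}\rangle\,dx$ minus a commutator in $\partial_t A_0$, while an integration by parts turns the transport term into a commutator in $\partial_x A_1$, the boundary contributions vanishing since $\textbf{w}\in H^2(\mathbb{R})$ decays at infinity. The decisive term is the source: substituting $G=D(\textbf{u},\bar{\textbf{u}})\textbf{w}$ and using that $A_0D$ is negative definite on $\Omega$, one gets $\int\langle A_0G,\textbf{w}\rangle\,dx\le -c\,\|\textbf{w}(\tau)\|_0^2$ for some $c>0$, valid as long as $\textbf{u}(x,t)$ remains in $\Omega$, which is guaranteed for small $N_2(T)$ through the embedding $H^1(\mathbb{R})\hookrightarrow L^\infty(\mathbb{R})$.

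Collecting these contributions yields the differential inequality
\[
\frac{d}{dt}\frac12\int\langle A_0\textbf{w},\textbf{w}\rangle\,dx + c\,\|\textbf{w}\|_0^2 \le \frac12\int\langle(\partial_t A_0)\textbf{w},\textbf{w}\rangle\,dx + \frac12\int\langle(\partial_x A_1)\textbf{w},\textbf{w}\rangle\,dx.
\]
Integrating in time over $[0,t]$ and exploiting that $A_0$ is positive definite and bounded on $\Omega$, so that $\int\langle A_0\textbf{w},\textbf{w}\rangle\,dx$ is equivalent to $\|\textbf{w}\|_0^2$, I obtain
\[
\|\textbf{w}(t)\|_0^2 + \int_0^t\|\textbf{w}(\tau)\|_0^2\,d\tau \le C\,\|\textbf{w}(0)\|_0^2 + C\int_0^t R(\tau)\,d\tau,
\]
where $R$ stands for the two commutator integrals. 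Taking the supremum over $t\in[0,T]$ reconstructs $N_0^2(T)$ on the left, while $\|\textbf{w}(0)\|_0^2\le N_2^2(0)$ handles the data term.

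It then remains to show $\int_0^T R(\tau)\,d\tau\le C\,N_2^3(T)$. Writing $\partial_t A_0=(D_{\textbf{u}}A_0)\,\partial_t\textbf{w}$ and $\partial_x A_1=(D_{\textbf{u}}A_1)\,\partial_x\textbf{w}$, with the Jacobians bounded on the compact set $\Omega$, and using the quasilinear form \eqref{compatta} to write $\partial_t\textbf{w}=-A\,\partial_x\textbf{w}+G$ so that $|\partial_t\textbf{w}|\le C(|\partial_x\textbf{w}|+|\textbf{w}|)$, each commutator integral is bounded by $\int(|\partial_x\textbf{w}|+|\textbf{w}|)|\textbf{w}|^2\,dx\le(\|\partial_x\textbf{w}\|_{L^\infty}+\|\textbf{w}\|_{L^\infty})\|\textbf{w}\|_0^2\le C\,\|\textbf{w}(\tau)\|_2\,\|\textbf{w}(\tau)\|_0^2$, again by the one-dimensional Sobolev embedding. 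Integrating in time and bounding $\|\textbf{w}(\tau)\|_2\le N_2(T)$ gives $C\,N_2(T)\int_0^T\|\textbf{w}\|_0^2\,d\tau\le C\,N_2(T)\,N_2^2(T)=C\,N_2^3(T)$, which yields the claimed estimate with $C_1$ the accumulated constant.

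The main obstacle is the honest accounting of the two commutator terms as genuinely cubic: one must use the evolution equation to trade the time derivative of $\textbf{w}$ for spatial derivatives plus the small source, and then spend one factor of $\textbf{w}$ in $L^\infty$ via $H^1\hookrightarrow L^\infty$. The smallness $N_2(T)\le\epsilon$ is used twice — to keep $\textbf{u}$ inside $\Omega$ so that the dissipativity $A_0D\le -c\,I$ is available pointwise, and to ensure the cubic remainder is a true higher-order perturbation of the dissipative $L^2$ estimate.
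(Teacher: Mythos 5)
Your proposal is correct and follows essentially the same route as the paper's own proof: the symmetrized $L^2$ energy identity with the commutators in $\partial_t A_0$ and $\partial_x A_1$, the dissipation term extracted from the negative definiteness of $A_0D$ via the \textbf{(D)-Condition}, the substitution $\partial_t\textbf{w}=-A\partial_x\textbf{w}+D\textbf{w}$ to recognize the remainders as cubic, and the Sobolev embedding $H^1(\mathbb{R})\hookrightarrow L^\infty(\mathbb{R})$ to close the estimate as $C N_2^3(T)$. Your write-up is in fact slightly more careful than the paper's on two points it leaves implicit, namely the uniformity of the dissipation constant on the compact set $\Omega$ and the role of $N_2(T)\le\epsilon$ in keeping the solution inside $\Omega$.
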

Usually, to state an estimate as (\ref{lemma1eq}), a function of convex entropy is  used, but here, in our proof of Lemma \ref{lemma1}, we do not use anything but the dissipative property of system (\ref{completo}).

\begin{proof}
Using $\textbf{(D)-Condition}$ and (\ref{compatta1}), let us consider system (\ref{completo}) in the following symmetric form:
\begin{equation}
A_{0}(\textbf{w}+\bar{\textbf{u}})\partial_{t}{\textbf{w}} + A_{1}(\textbf{w}+\bar{\textbf{u}})\partial_{x}{\textbf{w}}= A_{0}(\textbf{w}+\bar{\textbf{u}})D(\textbf{u}, \bar{\textbf{u}}){\textbf{w}}. \\
\end{equation} 

In the previous equation, the new reaction term is
\begin{equation}
\label{sorgente}
D_{1}(\textbf{u}, \bar{\textbf{u}}, \textbf{w}) := A_{0}(\textbf{w}+\bar{\textbf{u}})D(\textbf{u}, \bar{\textbf{u}}).
\end{equation}

Therefore, we have
\begin{equation}
\label{compatta2}
A_{0}(\textbf{w}+\bar{\textbf{u}})\partial_{t}\textbf{w}+ A_{1}(\textbf{w} + \bar{\textbf{u}})\partial_{x}\textbf{w} = D_{1}(\textbf{u}, \bar{\textbf{u}}, \textbf{w}){\textbf{w}}. 
\end{equation} 

We have the following identities:
\begin{equation}
\label{id1}
( A_{0}(\textbf{w}+\bar{\textbf{u}})\partial_{t}{\textbf{w}}, {\textbf{w}} ) = \frac{1}{2} \partial_{t}(A_{0}(\textbf{w}+\bar{\textbf{u}}){\textbf{w}},{\textbf{w}} ) - \frac{1}{2} (\partial_{t}A_{0}(\textbf{w}+\bar{\textbf{u}}){\textbf{w}}, {\textbf{w}} );
\end{equation} 

\begin{equation}
\label{id2}
( A_{1}(\textbf{w}+\bar{\textbf{u}})\partial_{x}{\textbf{w}},{\textbf{w}} ) = \frac{1}{2} \partial_{x}(A_{1}(\textbf{w}+\bar{\textbf{u}}){\textbf{w}}, {\textbf{w}} ) - \frac{1}{2} (\partial_{x}A_{1}(\textbf{w}+\bar{\textbf{u}})\bar{\textbf{w}}, {\textbf{w}} ).
\end{equation} 

We consider (\ref{compatta2}) and take the inner product with ${\textbf{w}}$, which yields
\begin{equation}
\label{scalare}
(A_{0}(\textbf{w}+\bar{\textbf{u}}) \partial_{t}{\textbf{w}},{\textbf{w}}) +( A_{1}(\textbf{w}+\bar{\textbf{u}})\partial_{x}{\textbf{w}},{\textbf{w}}) = (D_{1}(\textbf{u}, \bar{\textbf{u}}, \textbf{w}){\textbf{w}},{\textbf{w}}). 
\end{equation} 

Using the identities (\ref{id1}) and (\ref{id2}) in (\ref{scalare}), we obtain
\begin{equation*}
\frac{1}{2}\partial_{t}(A_{0}(\textbf{w}+\bar{\textbf{u}}) {\textbf{w}}, {\textbf{w}}) + \frac{1}{2}\partial_{x}( A_{1}(\textbf{w}+\bar{\textbf{u}})){\textbf{w}},{\textbf{w}}) 
\end{equation*}

\begin{equation}
\label{scalare1}
=  \frac{1}{2}(\partial_{t}A_{0}(\textbf{w}+\bar{\textbf{u}}) {\textbf{w}},{\textbf{w}}) +\frac{1}{2}(\partial_{x}A_{1}(\textbf{w}+\bar{\textbf{u}}) {\textbf{w}},{\textbf{w}}) + (D_{1}(\textbf{u}, \bar{\textbf{u}}, \textbf{w}){\textbf{w}},{\textbf{w}}). 
\end{equation}

Therefore, if we integrate equality (\ref{scalare}) over $\mathbb{R}\times[0, T]$, we have

\begin{equation*}
\frac{1}{2}\int_{\mathbb{R}} (A_{0}(\textbf{w}(T)+\bar{\textbf{u}}){\textbf{w}}(T),{\textbf{w}}(T))  ~ dx ~  - \frac{1}{2}\int_{\mathbb{R}} (A_{0}(\textbf{w}(0)+\bar{\textbf{u}}){\textbf{w}}(0),{\textbf{w}}(0)) ~ dx ~ 
\end{equation*} 

\begin{equation}
= \int_{0}^{T}  dt ~\int_{\mathbb{R}}\Bigg ( \left[\frac{1}{2}\partial_{t}A_{0}(\textbf{w}+\bar{\textbf{u}}) + \frac{1}{2}\partial_{x}A_{1}(\textbf{w}+\bar{\textbf{u}}) +  D_{1}(\textbf{u}, \bar{\textbf{u}}, \textbf{w})\right]{\textbf{w}},{\textbf{w}} \Bigg ) ~ dx =: \mathrm{I}.
\end{equation} 

To estimate $\mathrm{I}$, we use (\ref{compatta2}) in the following form:

\begin{equation}
\label{derivata1}
\partial_{t}{\textbf{w}} =  -A (\textbf{w}+\bar{\textbf{u}}) \partial_{x}{\textbf{w}} + D(\textbf{u}, \bar{\textbf{u}}, \textbf{w}){\textbf{w}}. 
\end{equation} 

Then
\begin{equation}
\label{derivata2}
\partial_{t}A_{0} (\textbf{w}+\bar{\textbf{u}})= A'_{0} (\textbf{w}+\bar{\textbf{u}}) \partial_{t}{\textbf{w}},
\end{equation} 

and
\begin{equation}
\partial_{t}A_{0} (\textbf{w}+\bar{\textbf{u}}) =  A'_{0} (\textbf{w}+\bar{\textbf{u}})(- A (\textbf{w}+\bar{\textbf{u}}) \partial_{x}{\textbf{w}} + D(\textbf{u}, \bar{\textbf{u}}, \textbf{w}){\textbf{w}}). 
\end{equation} 

Therefore, we have
\begin{equation*}
\mathrm{I} = - \int_{0}^{T}  dt ~\int_{\mathbb{R}} \frac{1}{2}((A'_{0}A(\textbf{w}+\bar{\textbf{u}})\partial_{x}{\textbf{w}},{\textbf{w}}),{\textbf{w}}) ~ dx ~  
\end{equation*} 

$$+ \int_{0}^{T}  dt ~\int_{\mathbb{R}]}\frac{1}{2}((A'_{1}(\textbf{w}+\bar{\textbf{u}})\partial_{x}{\textbf{w}},{\textbf{w}}),{\textbf{w}}) ~ dx$$

\begin{equation}
+ \int_{0}^{T}  dt ~ \int_{\mathbb{R}} D_{1}(\textbf{u}, \bar{\textbf{u}}, \textbf{w}) ~ dx +  \int_{0}^{T}  dt ~ \int_{\mathbb{R}} \frac{1}{2}(A'_{0}D(\textbf{u}, \bar{\textbf{u}}) \bar{\textbf{w}} \cdot {\textbf{w}},{\textbf{w}}) ~ dx.
\end{equation} 

Then
\begin{equation*}
\frac{1}{2}\int_{\mathbb{R}} (A_{0}(\textbf{w}(T)+\bar{\textbf{u}}){\textbf{w}}(T),{\textbf{w}}(T)) ~ dx ~ - \int_{0}^{T}  dt ~ \int_{\mathbb{R}} (D_{1}(\textbf{u}, \bar{\textbf{u}}, \textbf{w}) ~ dx ~ 
\end{equation*} 

\begin{equation*}
= \frac{1}{2}\int_{\mathbb{R}} (A_{0}(\textbf{w}(0)+\bar{\textbf{u}}){\textbf{w}}(0),{\textbf{w}}(0)) ~ dx ~ +  \int_{0}^{T}  dt ~\int_{\mathbb{R}}\frac{1}{2}((A'_{1}(\textbf{w}+\bar{\textbf{u}})\partial_{x}{\textbf{w}},{\textbf{w}}),{\textbf{w}}) ~ dx  
\end{equation*}

\begin{equation}
\label{quasiprimastima}
+  \int_{0}^{T}  dt ~ \int_{\mathbb{R}} \frac{1}{2} ((A'_{0}D(\textbf{u}, \bar{\textbf{u}}){\textbf{w}}, {\textbf{w}}),{\textbf{w}}) -  \int_{0}^{T}  dt ~\int_{\mathbb{R}} \frac{1}{2}((A'_{0}A(\textbf{w}+\bar{\textbf{u}})\partial_{x}{\textbf{w}}),{\textbf{w}},{\textbf{w}}) ~ dx .
\end{equation}
 
Since $A_{0}$ is positive definite, $D_{1}$ is negative definite and since $A_{0}, A'_{1}, A'_{0}D, A'_{0}A$ are bounded in a neighborhood of the equilibrium point $\bar{\textbf{u}},$ we have
\begin{equation*}
\frac{c}{2} ||{\textbf{w}}(T)||_{0}^{2} + {c_{1}} \int_{0}^{T} ||{\textbf{w}}(t)||_{0}^{2} ~ dt 
\end{equation*}

\begin{equation*}
\le \frac{c_{2}}{2}||{\textbf{w}}(0)||_{0}^{2} + \frac{c_{3}}{2}\int_{0}^{T}  dt ~\int_{\mathbb{R}}|\partial_{x}{\textbf{w}}| |\textbf{w}|^{2} ~ dx + \frac{c_{4}}{2}  \int_{0}^{T}  dt ~\int_{\mathbb{R}} |\partial_{x}{\textbf{w}}| |{\textbf{w}}|^{2} ~ dx 
\end{equation*}

\begin{equation*}
+ \frac{c_{5}}{2} \int_{0}^{T}  dt ~ \int_{\mathbb{R}} |{\textbf{w}}| |{\textbf{w}}|^{2} ~ dx
\end{equation*}

\begin{equation}
\label{primastima}
\le \frac{c_{2}}{2}||{\textbf{w}}(0)||_{0}^{2} + \frac{c_{6}}{2} ||\partial_{x}{\textbf{w}}||_{\infty} \int_{0}^{T}  dt ~ ||{\textbf{w}}||_{0}^{2} + \frac{c_{5}}{2} ||{\textbf{w}}||_{\infty} \int_{0}^{T}  dt ~ ||{\textbf{w}}||_{0}^{2},
\end{equation}

where $c, c_{1}, c_{2}, c_{3}, c_{4}, c_{5}, c_{6} \in \mathbb{R^{+}}$. \\

The embedding of  $H^{1}$ in $L_{\infty}$, where $\alpha$ is the constant of the embedding, yields

\begin{equation}
||\textbf{w}||_{\infty} \le \alpha ||\textbf{w}||_{H^{1}} = \alpha (||\textbf{w}||_{0} + ||\partial_{x} \textbf{w}||_{0}).
\end{equation}

Thus, from the definition of functional $N_{2}(t)$ in (\ref{funzionale}), we have
\begin{equation}
||\partial_{x}{\textbf{w}}||_{\infty} \le \alpha (||\partial_{x}{\textbf{w}}||_{0} + ||\partial_{xx}{\textbf{w}}||_{0}) \le N_{2}(T),
\end{equation}

and 
\begin{equation}
||{\textbf{w}}||_{\infty} \le \alpha (||{\textbf{w}}||_{0} + ||\partial_{x}{\textbf{w}}||_{0}) \le N_{2}(T).
\end{equation}

The last term in (\ref{primastima}) is estimated by
\begin{equation}
\frac{c_{2}}{2} N_{2}^{2}(0) + \frac{c_{6}}{2} N_{2}^{3}(T) + \frac{c_{5}}{2} N_{2}^{3}(T).
\end{equation}
So, using (\ref{ultima}), we have

\begin{equation}
\frac{c}{2} ||{\textbf{w}}(T)||_{0}^{2} + \frac{c_{1}}{2} \int_{0}^{T} ||{\textbf{w}}(t)||_{0}^{2} ~ dt \le C(N_{2}^{2}(0) + N_{2}^{3}(T)),
\end{equation}
and, therefore
 
\begin{equation}
||{\textbf{w}}(T)||_{0}^{2} + \int_{0}^{T} ||{\textbf{w}}(t)||_{0}^{2} ~ dt \le C_{1}(N_{2}^{2}(0) + N_{2}^{3}(T)).
\end{equation}
\end{proof}

Let us now estimate the first and second order derivatives.

\begin{lemma} 
\label{lemma2}
If $N_{2}(T) \le \epsilon $, Then, for $l = 1,2,$
\begin{equation}
sup_{~ 0\le t \le T~} ||\textbf{w}(t)||_{l}^{2}+\int_{0}^{T} ||\textbf{w}(t)||_{l}^{2} ~ dt \le C(N_{2}^{2}(0)+N_{2}^{3}(T)).
\end{equation}
\end{lemma}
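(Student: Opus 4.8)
The plan is to carry over the energy argument of Lemma \ref{lemma1} to the system obtained by differentiating (\ref{compatta2}) in space, so that dissipation is recovered at the level of first and second derivatives. I would start with $l=1$. Differentiating (\ref{compatta2}) once in $x$ and setting $\textbf{w}_{x}:=\partial_{x}\textbf{w}$, one obtains a system of the same symmetric hyperbolic form,
$$A_{0}(\textbf{w}+\bar{\textbf{u}})\,\partial_{t}\textbf{w}_{x} + A_{1}(\textbf{w}+\bar{\textbf{u}})\,\partial_{x}\textbf{w}_{x} = D_{1}(\textbf{u},\bar{\textbf{u}},\textbf{w})\,\textbf{w}_{x} + \textbf{R}_{1},$$
where $\textbf{R}_{1}$ gathers all the commutator terms produced when $\partial_{x}$ falls on the coefficient matrices, namely $(\partial_{x}D_{1})\,\textbf{w} - (\partial_{x}A_{0})\,\partial_{t}\textbf{w} - (\partial_{x}A_{1})\,\textbf{w}_{x}$. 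Since $\partial_{x}A_{0}=A'_{0}(\textbf{w}+\bar{\textbf{u}})\,\partial_{x}\textbf{w}$ and likewise for $A_{1}$ and $D_{1}$, and since $\partial_{t}\textbf{w}$ may be replaced through (\ref{derivata1}), every term of $\textbf{R}_{1}$ carries at least one extra factor $\partial_{x}\textbf{w}$.

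I would then repeat the computation of Lemma \ref{lemma1} line by line: take the inner product of the differentiated equation with $\textbf{w}_{x}$, apply the two integration by parts identities (\ref{id1})--(\ref{id2}) to turn the principal part into a time derivative plus a space derivative plus lower order terms, and integrate over $\mathbb{R}\times[0,T]$. The positive definiteness of $A_{0}$ bounds $||\textbf{w}_{x}(T)||_{0}^{2}$ from below, while the leading source term $(D_{1}\textbf{w}_{x},\textbf{w}_{x})$, negative definite by \textbf{(D)-Condition} throughout the neighborhood $\Omega$, supplies the dissipative integral $\int_{0}^{T}||\textbf{w}_{x}(t)||_{0}^{2}\,dt$. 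Every remaining contribution — the terms in $\textbf{R}_{1}$ and the commutators coming from $\partial_{t}A_{0}$ and $\partial_{x}A_{1}$ — is a cubic expression of the type $|\partial_{x}\textbf{w}|^{3}$ or $|\textbf{w}|\,|\partial_{x}\textbf{w}|^{2}$; estimating the lowest order factor in $L^{\infty}$ by the embedding $H^{1}\hookrightarrow L^{\infty}$ and using $||\textbf{w}||_{\infty},||\partial_{x}\textbf{w}||_{\infty}\le N_{2}(T)$, each is bounded by $N_{2}^{3}(T)$. This gives the estimate for $l=1$.

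For $l=2$ the scheme is identical after differentiating (\ref{compatta2}) twice and testing against $\partial_{xx}\textbf{w}$. The only new feature is the appearance of second derivatives of the coefficients, for instance $\partial_{xx}A_{0}=A''_{0}(\partial_{x}\textbf{w})^{2}+A'_{0}\,\partial_{xx}\textbf{w}$; the resulting remainder $\textbf{R}_{2}$ then consists of products of three factors chosen among $\textbf{w},\partial_{x}\textbf{w},\partial_{xx}\textbf{w}$, with at most two derivatives on any single factor, so that placing the lowest order factor in $L^{\infty}$ again yields a bound by $N_{2}^{3}(T)$. As before the top order quadratic piece $(D_{1}\,\partial_{xx}\textbf{w},\partial_{xx}\textbf{w})$ provides the dissipation $\int_{0}^{T}||\partial_{xx}\textbf{w}(t)||_{0}^{2}\,dt$. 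Adding the $l=0$ estimate of Lemma \ref{lemma1} to the $l=1$ and $l=2$ estimates yields the full $H^{2}$ inequality.

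The step I expect to be the main obstacle is the bookkeeping of the commutator terms: one must verify that, at each differentiation order, the only surviving quadratic contribution is the dissipative source term, while the principal transport term always appears as a perfect space derivative (hence integrates to zero, the boundary data vanishing at infinity) and every other term is genuinely cubic and therefore absorbable into $N_{2}^{3}(T)$. This structural cancellation — which relies precisely on the symmetric form (\ref{compattasimmetrizzata}) and on the sign in \textbf{(D)-Condition}, exactly as in \cite{HN}, \cite{Nishida}, \cite{Kawashima} — is what makes the derivative estimates close without any convex entropy.
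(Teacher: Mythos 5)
Your proposal is correct and follows essentially the same strategy as the paper's own proof: differentiate the symmetrized system once (resp.\ twice) in $x$, test with $\partial_{x}\textbf{w}$ (resp.\ $\partial_{xx}\textbf{w}$), use the integration-by-parts identities so the transport part becomes a perfect space derivative that integrates to zero, let the negative definite term $(D_{1}\partial_{x}^{l}\textbf{w},\partial_{x}^{l}\textbf{w})$ supply the dissipation integral, and bound all commutator terms as cubic expressions via the embedding $H^{1}\hookrightarrow L^{\infty}$ and $N_{2}(T)$. The structural points you flag as the main obstacle (perfect-derivative cancellation of the principal part, dissipative sign from \textbf{(D)-Condition}, all remainders genuinely cubic) are exactly the ones the paper's computation verifies.
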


\begin{proof}
Apply the first space derivative to system (\ref{compatta1}) and take the inner product with $\partial_{x}\textbf{w}$, which provides
\begin{equation}
\label{D}
\partial_{x}(A_{0}(\textbf{w}+\bar{\textbf{u}})\partial_{t}\textbf{w} + A_{1}(\textbf{w}+\bar{\textbf{u}})\partial_{x}\textbf{w}) \cdot \partial_{x}\textbf{w} =[\partial_{x}(A_{0}D)\textbf{w} + (A_{0}D)\partial_{x}\textbf{w}] \cdot \partial_{x}\textbf{w}.
\end{equation}

We have the following identities:
\begin{equation*}
\partial_{x}(A_{0}(\textbf{w}+\bar{\textbf{u}})\partial_{t}\textbf{w}) \cdot \partial_{x}\textbf{w} = \frac{1}{2}\partial_{t}((A_{0}(\textbf{w}+\bar{\textbf{u}})\partial_{x}\textbf{w}) \cdot \partial_{x}\textbf{w}) - \frac{1}{2}(\partial_{t}A_{0}\partial_{x}\textbf{w}) \cdot \partial_{x}\textbf{w} 
\end{equation*}

\begin{equation}
+(\partial_{x}A_{0}\partial_{t}\textbf{w}) \cdot \partial_{x} \textbf{w};
\end{equation}

\begin{equation}
\partial_{x}(A_{1}(\textbf{w}+\bar{\textbf{u}})\partial_{x}\textbf{w}) \cdot \partial_{x} \textbf{w} = \frac{1}{2}\partial_{x}((A_{1}(\textbf{w}+\bar{\textbf{u}})\partial_{x}\textbf{w}) \cdot \partial_{x}\textbf{w}) + \frac{1}{2}(\partial_{x}A_{1}\partial_{x}\textbf{w}) \cdot \partial_{x} \textbf{w}.
\end{equation}

If we integrate equality (\ref{D}) over $\mathbb{R}$ and use the previous identities, the term
$$\partial_{x}((A_{1}(\textbf{w}+\bar{\textbf{u}})\partial_{x}\textbf{w}) \cdot \partial_{x}\textbf{w})$$ vanishes and then we have

\begin{equation*}
\frac{1}{2}\frac{d}{dt} \int_{\mathbb{R}} (A_{0}(\textbf{w}+\bar{\textbf{u}})\partial_{x}\textbf{w}) \cdot \partial_{x}\textbf{w} ~ dx - \int_{\mathbb{R}} ((A_{0}D)\partial_{x}\textbf{w}) \cdot \partial_{x}\textbf{w} ~ dx
\end{equation*}

\begin{equation}
\label{integrata}
= \int_{\mathbb{R}} \Bigg\{ \frac{1}{2}\partial_{t}A_{0}\partial_{x}\textbf{w} - \partial_{x}A_{0}\partial_{t}\textbf{w}- \frac{1}{2}\partial_{x}A_{1}\partial_{x}\textbf{w} \Bigg\} \cdot \partial_{x}\textbf{w} + (\partial_{x}(A_{0}D)\textbf{w}) \cdot \partial_{x}\textbf{w} ~ dx.
\end{equation}

To estimate the right-end side of (\ref{integrata}), we use (\ref{compatta1}) in the following form:
\begin{equation}
\partial_{t}\textbf{w} = - A\partial_{x}\textbf{w}+D\textbf{w}.
\end{equation}

Then

\begin{equation*}
\partial_{t}A_{0} = A_{0}'\partial_{t}\textbf{w} = A_{0}'(- A\partial_{x}\textbf{w}+D\textbf{w}).
\end{equation*}

Thus, equality (\ref{integrata}) is

\begin{equation*}
\frac{1}{2}\frac{d}{dt} \int_{\mathbb{R}} (A_{0}(\textbf{w}+\bar{\textbf{u}})\partial_{x}\textbf{w}) \cdot \partial_{x}\textbf{w} ~ dx - \int_{\mathbb{R}} ((A_{0}D)\partial_{x}\textbf{w}) \cdot \partial_{x}\textbf{w} ~ dx
\end{equation*}

$$=\int_{\mathbb{R}} -\frac{1}{2}((A_{0}'A\partial_{x}\textbf{w}, \partial_{x}\textbf{w}), \partial_{x}\textbf{w}) + \frac{1}{2} (A_{0}'\partial_{x}\textbf{w}A\partial_{x}\textbf{w}, \partial_{x}\textbf{w}) + \frac{1}{2} (A_{0}'D\textbf{w} \partial_{x}\textbf{w}, \partial_{x}\textbf{w})~ dx$$

\begin{equation}
\label{ultima}
-\frac{1}{2}((A_{0}A'\partial_{x}\textbf{w}, \partial_{x}\textbf{w}), \partial_{x}\textbf{w})+\int_{\mathbb{R}} (A_{0}D'(\partial_{x}\textbf{w})\textbf{w}, \partial_{x}\textbf{w}) ~ dx.
\end{equation}

Using (\ref{ultima}), we have
$$||\partial_{x}\textbf{w}(T)||_{0}^{2} + \int_{0}^{T} ||\partial_{x}\textbf{w}(t)||_{0}^{2} ~ dt$$

$$\le C_{1}(\epsilon) \Bigg( ||\partial_{x}\textbf{w}(0)||_{0}^{2} + \int_{0}^{T} \int_{\mathbb{R}} |\partial_{x}\textbf{w}|^{2}|\partial_{x}\textbf{w}| ~ dxdt \Bigg)$$

\begin{equation}
\label{stimaderivataprima}
\le C_{1}(\epsilon) \Bigg( ||\partial_{x}\textbf{w}(0)||_{0}^{2} + (||\textbf{w}||_{\infty}+||\partial_{x}\textbf{w}||_{\infty})\int_{0}^{T} ||\partial_{x}\textbf{w}(t)||_{0}^{2} ~ dt \Bigg).
\end{equation}

\vspace{5mm}
In the same way, we perform the second space derivative of (\ref{compatta2}) and take the inner product with $\partial_{xx}\textbf{w}$, which provides

\begin{equation}
\label{duederivate}
\partial_{xx}(A_{0}\partial_{t}\textbf{w} + A_{1}\partial_{x}\textbf{w}) \cdot \partial_{xx}\textbf{w} = \partial_{xx}(A_{0}D\textbf{w}) \cdot \partial_{xx}\textbf{w}.
\end{equation}

We have the following identities:
\begin{equation}
\partial_{xx}(A_{0}\partial_{t}\textbf{w}) \cdot \partial_{xx}\textbf{w} 
\end{equation}

\begin{equation}
= \frac{1}{2}\partial_{t}((A_{0}\partial_{xx}\textbf{w}) \cdot \partial_{xx}\textbf{w}) - \frac{1}{2}(\partial_{t}A_{0}\partial_{xx}\textbf{w}) \cdot \partial_{xx}\textbf{w}
\end{equation}

\begin{equation}
\label{iden1}
+ 2 (\partial_{x}A_{0}\partial_{xt}\textbf{w}) \cdot \partial_{xx}\textbf{w} + (\partial_{xx}A_{0}\partial_{t}\textbf{w}) \cdot \partial_{xx}\textbf{w};
\end{equation}

\vspace{5mm}

\begin{equation*}
\partial_{xx}(A_{1}(\textbf{W})\partial_{x}\textbf{w}) \cdot \partial_{xx}\textbf{w} 
\end{equation*}

\begin{equation}
\label{iden2}
= \frac{1}{2}\partial_{x}((A_{1}\partial_{xx}\textbf{w}) \cdot \partial_{xx}\textbf{w}) + \frac{3}{2}(\partial_{x}A_{1}\partial_{xx}\textbf{w}) \cdot \partial_{xx}\textbf{w} + (\partial_{xx}A_{1}\partial_{x}\textbf{w}) \cdot \partial_{xx}\textbf{w}.
\end{equation}

If we integrate (\ref{duederivate}) over $\mathbb{R}$ and we use the previous identities (\ref{iden1})-(\ref{iden2}), the term
$$\partial_{x}((A_{1}\partial_{xx}\textbf{w}) \cdot \partial_{xx}\textbf{w})$$

vanishes.\\

In this way, we have
\begin{equation*}
\frac{1}{2}\frac{d}{dt} \int_{\mathbb{R}} (A_{0}\partial_{xx}\textbf{w}) \cdot \partial_{xx}\textbf{w} ~ dx - \int_{\mathbb{R}} (A_{0}D\partial_{xx}\textbf{w}, \partial_{xx}\textbf{w}) ~ dx 
\end{equation*}

\begin{equation*}
= \int_{\mathbb{R}} \Bigg\{\frac{1}{2}\partial_{t}A_{0}\partial_{xx}\textbf{w} -2\partial_{x}A_{0}\partial_{xt}\textbf{w} - \partial_{xx}A_{0}\partial_{t}\textbf{w}\Bigg\} \cdot \partial_{xx}\textbf{w} ~ dx
\end{equation*}

\begin{equation*}
- \int_{\mathbb{R}} \Bigg \{\frac{3}{2}\partial_{x}A_{1}\partial_{xx}\textbf{w} + \partial_{xx}A_{1}\partial_{x}\textbf{w}\Bigg\} \cdot \partial_{xx}\textbf{w} + (\partial_{xx}(A_{0}D) \textbf{w}, \partial_{xx}\textbf{w}) + ((A_{0}D) \partial_{x}\textbf{w},  \partial_{xx}\textbf{w})~ dx
\end{equation*}

$$= \int_{\mathbb{R}} \Bigg\{ -\frac{1}{2} (A_{0}'A \partial_{x}\textbf{w}, \partial_{xx}\textbf{w}) + \frac{1}{2} (A_{0}'D \textbf{w}, \partial_{xx}\textbf{w})  + (A_{0}'  \partial_{x}\textbf{w}A' \partial_{x}\textbf{w}, \partial_{x}\textbf{w})$$

$$+\frac{1}{2}A_{0}'\partial_{x}\textbf{w}A\partial_{xx}\textbf{w} - 2(A_{0}'\partial_{x}\textbf{w}D'\partial_{x}\textbf{w}, \textbf{w}) - 2A_{0}'\partial_{x}\textbf{w}D\partial_{x}\textbf{w}-A_{0}'\partial_{xx}\textbf{w}D\textbf{w} $$

\begin{equation}
\label{perstima}
- (A_{0}A''\partial_{x}\textbf{w} \cdot \partial_{x}\textbf{w}, \partial_{x}\textbf{w}) - (A_{0}A' \partial_{xx}\textbf{w}, \partial_{x}\textbf{w}) + (A_{0}'D'\partial_{xx}\textbf{w},\textbf{w}) \Bigg\} \cdot \partial_{xx}\textbf{w} ~ dx.
\end{equation}

Therefore
\begin{equation*}
||\partial_{xx}\textbf{w}(T)||_{0}^{2} + \int_{0}^{T} ||\partial_{xx}\textbf{w}(t)||_{0}^{2} ~  dt
\end{equation*}

\begin{equation*}
\le C_{2}(\epsilon) \Bigg\{||\partial_{xx}\textbf{w}(0)||_{0}^{2} + \int_{0}^{T} \int_{[0,1]} (|\partial_{x}\textbf{w}|^{2} + |\partial_{xx}\textbf{w}|^{2}) (|\partial_{x}\textbf{w}| + |\textbf{w}| + |\partial_{x}\textbf{w}|) ~  dx dt \Bigg \}
\end{equation*}

\begin{equation}
\label{stimaderivataseconda}
\le C_{2}(\epsilon) \Bigg \{||\partial_{xx}\textbf{w}(0)||_{0}^{2} + (||\textbf{w}||_{\infty} + ||\partial_{x}\textbf{w}||_{\infty}) \int_{0}^{T} (||\partial_{x}\textbf{w}(t)||_{0}^{2} + ||\partial_{xx}\textbf{w}(t)||_{0}^{2}) ~ dt \Bigg \}.
\end{equation}
\end{proof}

We note that, by \emph{Lemma \ref{lemma1}} and \emph{Lemma \ref{lemma2}}, we are able to prove inequality (\ref{disuguaglianzafondamentale}). Then, \emph{Proposition \ref{proposizione}} is proved. \\
To conclude the proof of \emph{Theorem \ref{Theorem}}, we are going to prove the  statement about the asymptotic behavior of solutions. \\ 

Let us set 

\begin{equation}
\label{energy}
\mathrm{E}(t) = \dfrac{1}{2} ||\textbf{w}(t)||^{2}_{2}.
\end{equation}

Taking the time derivative of (\ref{quasiprimastima}) and using the embedding of $H^{2}(\mathbb{R})$ in $L^{\infty}(\mathbb{R})$, we have 

\begin{equation}
\label{zerorder}
\frac{1}{2}\frac{d}{dt}||\textbf{w}(t)||_{0}^{2} + c_{1}||\textbf{w}(t)||_{0}^{2} \le c_{2} ||\textbf{w}(t)||_{2} ||\textbf{w}(t)||_{0}^{2}.
\end{equation}

Taking the time derivative of (\ref{ultima}), we obtain, in the same way, the estimate on the first derivative of $\textbf{w}$. Therefore

\begin{equation}
\label{firstorder}
\frac{1}{2}\frac{d}{dt}||\partial_{x}\textbf{w}(t)||_{0}^{2} + c_{3}||\partial_{x}\textbf{w}(t)||_{0}^{2} \le c_{4} ||\textbf{w}(t)||_{2} ||\partial_{x}\textbf{w}(t)||_{0}^{2}.
\end{equation}

Finally, from the time derivative of (\ref{perstima}) and using Morrey's Theorem, we have the second order estimate

\begin{equation}
\label{secondorder}
\frac{1}{2}\frac{d}{dt}||\partial_{xx}\textbf{w}(t)||_{0}^{2} + c_{5}||\partial_{xx}\textbf{w}(t)||_{0}^{2} \le c_{6}||\textbf{w}(t)||_{2} ||\partial_{xx}\textbf{w}(t)||_{0}^{2}.
\end{equation}

We note that $c_{1}, c_{3}, c_{5}$ depend on the norm of $A_{0}, D$, while $c_{2}, c_{4}, c_{6}$ depend on the norm of $A_{0}, A_{0}', A, A', D, D'.$
Summing (\ref{zerorder}), (\ref{firstorder}) and (\ref{secondorder}), from (\ref{energy}) we have
\begin{equation}
\label{Gronwall}
\partial_{t}\mathrm{E} + \mu \mathrm{E} \le \nu \mathrm{E}^{3/2},
\end{equation}

where $\mu=c_{1}+c_{3}+c_{5}$ and $\nu=c_{2}+c_{4}+c_{6}$. \\

Now, thanks to the assumptions, we can choose the initial data small enough such that
 $$\mathrm{E}(0) < \Bigg({\frac{\gamma}{\nu}}\Bigg)^{2},$$ 
for some $0 < \gamma < min\{\mu, \nu\} $.
From the smallness of the initial data, we can obtain the inequality
$$\nu \mathrm{E}^{3/2} < \gamma \mathrm{E},$$
at least for small times. Therefore, using (\ref{Gronwall}) we have
$$\partial_{t}\mathrm{E} \le (\gamma - \mu) \mathrm{E}.$$
and (\ref{asymptotic}) follows directly from Gronwall's Lemma, taking $\beta=\nu-\gamma$.

\end{document}